\numberwithin{equation}{section}
\theoremstyle{plain}
\newtheorem{thm}{Theorem}[section]
\newtheorem{prp}[thm]{Proposition}
\newtheorem{lem}[thm]{Lemma}
\newtheorem*{euc*}{Euclidean division}
\newtheorem*{fek*}{Fekete's Lemma}
\newtheorem*{kin*}{Kingman's Subadditive Ergodic Theorem}
\newtheorem*{fur*}{Furstenberg-Kesten Theorem}
\theoremstyle{definition}
\newtheorem{rem}[thm]{Remark}
\newtheorem*{rem*}{Remark}
\newcommand{\dd}{\mathrm{d}}
 \newcommand{\Lim}{\mathop{\longrightarrow}\limits}
 \newcommand{\To}{\longrightarrow}
\renewcommand{\Im}{\operatorname{Im}}
\newcommand*{\ov}[1]{%
  $\m@th\overline{\mbox{#1}}$%
}
\newcommand*{\ovA}[1]{%
  $\m@th\overline{\mbox{#1}\raisebox{3mm}{}}$%
}
\newcommand*{\ovB}[1]{%
  $\m@th\overline{\mbox{#1\rule{0pt}{3mm}}}$%
}
\newcommand*{\ovC}[1]{%
  $\m@th\overline{\mbox{#1\strut}}$%
}
\newcommand*{\ovD}[1]{%
  $\m@th\overline{\mbox{#1\vphantom{\"A}}}$%
}
\newcommand*{\ovE}[1]{%
  $\m@th\overline{\raisebox{0pt}[1.2\height]{#1}}$%
}
\newcommand*{\ovF}[1]{%
  $\m@th\overline{\raisebox{0pt}[\dimexpr\height+0.3mm\relax]{#1}}$%
}
\newcommand*{\ovG}[1]{%
  $\m@th\overline{\raisebox{0pt}[\dimexpr\height+1mm\relax]{#1\vphantom{A}}}$%
}
\newcommand{\N}{\mathbb{N}}
\newcommand{\Z}{\mathbb{Z}}
\newcommand{\R}{\mathbb{R}}
\newcommand{\C}{\mathbb{C}}
\newcommand{\eps}{\epsilon}
\newcommand{\cW}{\mathcal{W}}
\newcommand{\cP}{\mathcal{P}}
\newcommand{\cE}{\mathcal{E}}
\newcommand{\cK}{\mathcal{K}}
\newcommand{\cH}{\mathcal{H}}
\newcommand{\cG}{\mathcal{G}}
\newcommand{\cD}{\mathcal{D}}
\newcommand{\cA}{\mathcal{A}}
\newcommand{\tilg}{{\tilde{g}}}
\DeclareMathOperator{\expect}{\mathbf{E}}
\DeclareMathOperator{\prob}{\mathbf{P}}
\DeclareMathOperator{\supp}{supp}
\DeclareSymbolFont{extraup}{U}{zavm}{m}{n}
\DeclareMathSymbol{\varheart}{\mathalpha}{extraup}{86}
\DeclareMathSymbol{\vardiamond}{\mathalpha}{extraup}{87}
\title{Quantum ergodicity for the Anderson model on regular graphs}
\author{Nalini Anantharaman and Mostafa \textsc{Sabri}}
\address{Universit\'e de Strasbourg, CNRS, IRMA UMR 7501, F-67000 Strasbourg, France.}
\email{anantharaman@math.unistra.fr}
\address{Universit\'e de Strasbourg, CNRS, IRMA UMR 7501, F-67000 Strasbourg, France.}
\address{Department of Mathematics, Faculty of Science, Cairo University, Cairo 12613, Egypt.}
\email{sabri@math.unistra.fr}
\subjclass[2010]{Primary 82B44, 58J51. Secondary 47B80, 60B20}
\keywords{Quantum ergodicity, large graphs, delocalization, Anderson model, Bethe lattice.}
\newlength{\temp@wc@width}
\newlength{\temp@wc@height}
\newcommand{\widecheck}[1]{%
  \setlength{\temp@wc@width}{\widthof{$#1$}}%
  \setlength{\temp@wc@height}{\heightof{$#1$}}%
  #1\hspace{-\temp@wc@width}%
  \raisebox{\temp@wc@height+2pt}[\heightof{$\widehat{#1}$}]%
     {\rotatebox[origin=c]{180}{\vbox to 0pt{\hbox{$\widehat{\hphantom{#1}}$}}}}%
}
\begin{document}

\begin{abstract}
We prove a result of delocalization for the Anderson model on the regular tree (Bethe lattice). When the disorder is weak, it is known that large parts of the spectrum are a.s. purely absolutely continuous, and that the dynamical transport is ballistic. In this work, we prove that in such AC regime, the eigenfunctions are also delocalized in space, in the sense that if we consider a sequence of regular graphs converging to the regular tree, then the eigenfunctions become asymptotically uniformly distributed. The precise result is a quantum ergodicity theorem.
\end{abstract}

\maketitle

\section{Introduction}          \label{sec:introd}

\subsection{Background and discussion}
The Anderson model on an infinite graph $\mathbb{G}$ is a random Schr\"odinger operator $\cH^{\omega} = \mathcal{A}_{\mathbb{G}}+\cW^{\,\omega}$ which consists of the adjacency matrix with a random i.i.d. perturbation potential (throughout, i.i.d. stands for independent, identically distributed). Continuum analogs have also been studied, where the adjacency matrix is replaced by the Laplace operator on $\R^d$. Since the original paper of Anderson \cite{And} which discussed conduction properties in the presence of impurities, a large body of mathematical literature has been devoted to proving \emph{Anderson localization} under appropriate assumptions.

Localization in an interval $I\subseteq \R$ is mathematically interpreted in the following senses~:
\begin{itemize}
\item \emph{spectral localization :} for a.e. $\omega$, the spectrum of $\cH^{\omega}$ in $I$ is pure point,
\item \emph{exponential localization :} the corresponding eigenfunctions decay exponentially,
\item \emph{dynamical localization :} an initial state with energy in $I$ which is localized in a bounded domain essentially stays in this domain as time goes on.
\end{itemize}

These forms of localization have been proved for a wide variety of models, both on $\ell^2(\Z^d)$ and $L^2(\R^d)$. For the Anderson model on $\ell^2(\Z)$ or $L^2(\R)$, it is known \cite{KuS,CKM,DSS} that the full spectrum becomes localized for any nontrivial disorder. In higher dimensions, localization was proved under the conditions of high disorder or extreme energies in \cite{FS,DLS,DK,AM,Aiz,BK,GK}. \emph{Delocalization} is expected in the opposite regime of weak disorder, well inside the spectrum, in dimension $d\ge 3$. But proving this remains a challenging open problem. Here, delocalization is understood in the sense of~:
\begin{itemize}
\item \emph{spectral delocalization :} for a.e. $\omega$, the spectrum of $\cH^{\omega}$ in $I$ is purely absolutely continuous (AC for short),
\item \emph{spatial delocalization :} the corresponding (generalized) eigenfunctions do not concentrate on small regions. Ideally, they are uniformly distributed,
\item \emph{diffusive transport :} wave packets with energies in $I$ spread on the lattice at a rate $t^\alpha$ as time goes on (the transport is called ballistic when the rate is linear).
\end{itemize}

The Anderson model on $\ell^2(\mathbb{T}_q)$ is more approachable. Here $\mathbb{T}_q$ is the $(q+1)$-regular tree, $q\ge 2$. Localization at high disorder or energies beyond $[-(q+1),(q+1)]$ was proved in \cite{AM,Aiz}. The first mathematical results of delocalization were obtained in \cite{Klein,KleinB} (see also \cite{KuS2} for a previous sketch of ideas), where it was shown that at weak disorder, the transport is ballistic, and the spectrum is purely AC a.s. in closed subsets of $(-2\sqrt{q},2\sqrt{q})$. A new proof was found in \cite{FHS}, and it was later shown in \cite{AW,AW2} that spectral delocalization and ballistic transport hold in larger regions of the spectrum. Similar results were obtained for more general tree models in \cite{KLW,FHH}. One reason that makes trees technically simpler to analyze is the fact that the Green function on a tree satisfies some convenient recursion and factorization relations. In the physics literature, the self-consistent theory formulated in \cite{AAT,AT} becomes exact in the case of $\mathbb{T}_q$.

In view of the previous results, it is natural to ask whether \emph{spatial} delocalization also holds for the Anderson model on $\mathbb{T}_q$ at weak disorder. As the wavefunctions corresponding to AC spectrum are not square summable, one way to interpret spatial delocalization is to consider an orthonormal basis of eigenfunctions $(\psi_j)$ of the model on a sequence of finite graphs $(G_N)$ which converges to $\mathbb{T}_q$ in some sense, and show that $\psi_j$ becomes delocalized as $N\to \infty$. In contrast to the situation in $\Z^d$, one should not take $G_N$ to be the subtree $B_N\subset \mathbb{T}_q$ consisting of vertices at distance at most $N$ from a fixed root. The intuitive reason is that points on the boundary of $B_N$ satisfy different adjacency properties than the ones in $\mathbb{T}_q$. This is not a problem in $\Z^d$ because $\frac{|\partial B_N|}{|B_N|} \to 0$ as $N\to \infty$. On $\mathbb{T}_q$ however, we have $\frac{|\partial B_N|}{|B_N|} \to \frac{q-1}{q}>0$. In fact, the \emph{Benjamini-Schramm limit} of $B_N$ is not $\mathbb{T}_q$, but rather a \emph{canopy tree} \cite{Gab}, on which $\cH^{\omega}$ has only pure point spectrum at any disorder \cite{AW3}.

A better candidate for $(G_N)$ is a sequence of $(q+1)$-regular graphs with few short loops. In this case, the empirical spectral measures on $G_N$ converge weakly to the spectral measure of $\mathcal{A}_{\mathbb{T}_q}$. This is known as the \emph{law of Kesten-McKay} \cite{Kesten,MK}.

So consider a sequence of $(q+1)$-regular graphs $(G_N)$ with few short loops and let $N$ be large. The question of spatial delocalization turns out to be already nontrivial for the adjacency matrix alone (i.e. $H_N=\cA_{G_N}$ without potential), and it was only considered quite recently. In \cite{BL} it is shown that eigenfunctions of $\mathcal{A}_{G_N}$ cannot concentrate in small regions. A different criterion for delocalization was proved in \cite{ALM,A}, where it is shown that if $(\psi_j)$ is an orthonormal basis of eigenfunctions of $\mathcal{A}_{G_N}$, then the probability measure $\sum_{x\in G_N} |\psi_j(x)|^2\delta_x$ approaches the uniform measure $\frac{1}{N} \sum_{x\in G_N} \delta_x$ as $N\to \infty$. This shows that the mass of $\|\psi_j\|^2$ becomes evenly distributed on $G_N$ for large $N$. This is the type of results we prove in this paper for the Anderson model. It is known as \emph{quantum ergodicity}, in reference to its original framework on compact manifolds \cite{Shni,CdV,Zel}. We will discuss the result in more detail after stating our main theorems and compare it with predictions from the physics literature.

The results we just mentioned hold for \emph{deterministic} sequences of graphs $(G_N)$, and this is also the framework of our paper. In the context of \emph{random graphs}, it has recently been shown in \cite{BHY} that the eigenvectors of the adjacency matrix $H_N = \cA_{G_N}$ are completely delocalized. In particular, while quantum ergodicity is a result of delocalization \emph{for most} $\psi_j$, the authors in \cite{BHY} prove a probabilistic version of this \emph{for all} $\psi_j$ - this is known as \emph{quantum unique ergodicity}. Delocalization for the Anderson model $H_N^{\omega}=\mathcal{A}_{G_N}+W^{\omega_N}$ on random regular graphs has also been considered in \cite{Geisinger}. 

\subsection{Main results}

Consider the adjacency matrix $\mathcal{A}_{\mathbb{G}}$ of a graph $\mathbb{G}$,
\[
(\mathcal{A}_{\mathbb{G}}f)(v) = \sum_{w \sim v} f(w)
\]
for $f\in\ell^2(\mathbb{G})$ and $v\in\mathbb{G}$. Here $v \sim w$ means that $v$ and $w$ are nearest neighbors. We also denote by $\mathcal{N}_v$ the set of nearest neighbors of $v$.

Let $\mathbb{T}_q$ be the $(q+1)$-regular tree and fix an origin $o\in\mathbb{T}_q$. Consider the probability space $(\Omega,\mathbf{P})$, where $\Omega = \R^{\mathbb{T}_q}$ and $\mathbf{P} = \mathop \otimes_{v\in \mathbb{T}_q} \nu$. Here $\nu$ is a probability measure on $\R$. Given $\omega = (\omega_v)_{v \in \mathbb{T}_q}$, the Anderson model on $\mathbb{T}_q$ is the Schr\"odinger operator given by
\[
\cH_{\eps}^{\omega} = \mathcal{A}_{\mathbb{T}_q} + \cW^{\,\omega}_{\eps} \, , \qquad \text{where} \quad (\cW^{\,\omega}_{\eps}f)(v) := \eps\,\omega_v f(v) \, ,
\]
and $\eps \in \R$ parametrizes the disorder. As $\mathbf{P}$ is a product measure, the $\{\omega_v\}_{v \in \mathbb{T}_q}$ are i.i.d. with common distribution $\nu$. Expectation w.r.t. $\prob$ is denoted by $\expect$.

Let $(G_N)$ be a deterministic sequence of $(q+1)$-regular graphs with vertex set $V_N$, $|V_N|=N$. Then $\mathbb{T}_q$ is the universal cover of $G_N$ for all $N$. Let $\Omega_N = \R^{V_N}$ and $\cP_N = \mathop\otimes_{x\in V_N} \nu$ on $\Omega_N$. We denote $\widetilde{\Omega} = \prod_{N\in \N} \Omega_N$ and let $\mathcal{P}$ be any probability measure
\footnote{For example, one may take $\cP=\mathop\otimes \cP_N$, so that the $(\omega_N)$ for different values of $N$ are independent. Another interesting example is to restrict $\mathbf{P}$ to larger and larger boxes in $\mathbb{T}_q$ to define $\cP$. More precisely, let us write $G_N$ as a quotient $\Gamma_N\backslash\mathbb{T}_q$, where $\Gamma_N$ is a group of automorphisms of $\mathbb{T}_q$ acting without fixed points.
Let $\cD_N$ be a fundamental domain for the action of $\Gamma_N$ on the vertices of $\mathbb{T}_q$; it is in bijection with $V_N$ through the covering map $\mathbb{T}_q\To G_N$. Consider the map $\Phi: \Omega \To \widetilde{\Omega} = \prod_{N\in \N} \Omega_N$, which starting from $\omega\in \Omega$ defines $\omega_N$ as the restriction of $\omega$ to $\cD_N\simeq V_N$. If we define $\cP$ as the pushforward of $\mathbf{P}$ under $\Phi$, our results apply in this context.}
on $\widetilde{\Omega}$ having $\cP_N$ as marginal on $\Omega_N$. Given $(\omega_N)\in \widetilde{\Omega}$, so that $\omega_N=(\omega_x)_{x\in V_N}\in \Omega_N$, define
\[
H_N^{\omega} = \mathcal{A}_{G_N} + W^{\omega_N}_{\eps}\,, \quad \text{where} \quad (W^{\omega_N}_{\eps} \psi)(x) := \eps\,\omega_x \psi(x) \, .
\]

We make the following assumption on the potential~:

\medskip

\textbf{(POT)} The measure $\nu$ has a compact support, $\supp \nu \subseteq [-A,A]$, and is H\"older continuous, i.e. there exist $C_{\nu}>0$, $b\in (0,1]$ such that $\nu(I) \le C_{\nu} |I|^b$ for all bounded $I\subset \R$.

\medskip

The continuity of $\nu$ is only needed here to use some estimates from \cite{AW2}. If $\nu$ has a bounded density w.r.t. Lebesgue measure, this condition is satisfied with $b=1$.

It is known that $\sigma(\mathcal{A}_{\mathbb{T}_q}) = [-2\sqrt{q},2\sqrt{q}]$ and $\sigma(\cH_{\eps}^{\omega}) = \sigma(\mathcal{A}_{\mathbb{T}_q}) + \eps \supp \nu$, $\mathbf{P}$-a.s.

We next make the following assumptions on $(G_N)$~:

\medskip

\textbf{(EXP)} The sequence $(G_N)$ is a family of expanders: there exists $\beta>0$ such that the spectrum of $(q+1)^{-1}\mathcal{A}_{G_N}$ in $\ell^2(V_N)$ is contained in $[-1+\beta,1-\beta] \cup \{1\}$ for all $N$.

\medskip

\textbf{(BST)} For all $r>0$,
\[
\lim_{N \to \infty} \frac{|\{x \in V_N : \rho_{G_N}(x)<r\}|}{N} = 0 \, ,
\]
where $\rho_{G_N}(x)$ is the \emph{injectivity radius} at $x$, i.e. the largest $\rho$ such that the ball $B(x,\rho)$ in $G_N$ is a tree. 

\medskip

It is known that typical random $(q+1)$-regular graphs, and some explicit sequences of Ramanujan graphs, both satisfy \textbf{(EXP)} and \textbf{(BST)}. See \cite[Examples 1,2]{ALM} for details.

We may now state our main results~:

\begin{thm}                    \label{thm:1}
Assume that \emph{\textbf{(POT)}}, \emph{\textbf{(EXP)}} and \emph{\textbf{(BST)}} hold. Given $(\omega_N)\in \widetilde{\Omega}$, let $(\psi_i^{\omega_N})_{i=1}^N$ be an orthonormal basis of eigenfunctions of $H_N^{\omega}$ in $\ell^2(V_N)$, with corresponding eigenvalues $(\lambda_i^{\omega_N})_{i=1}^N$. Let $a_N:V_N \to \C$ be any function independent of $(\omega_N)$, such that $\sup_N\sup _{x \in V_N} |a_N(x)| \le 1$, and fix $0<\lambda_0<2\sqrt{q}$. There exists $\eps(\lambda_0)>0$ such that if $|\eps|<\eps(\lambda_0)$, we have for $\cP$-a.e. $(\omega_N)$,
\[
\lim_{N \to \infty} \frac{1}{N} \sum_{\lambda_i^{\omega_N}\in (-\lambda_0,\lambda_0)} \big|\langle \psi_i^{\omega_N},a_N \psi_i^{\omega_N} \rangle - \langle a_N \rangle \big|  = 0 \, ,
\]
where $\langle \psi_i^{\omega_N},a_N \psi_i^{\omega_N} \rangle = \sum_{x \in V_N} a_N(x) |\psi_i^{\omega_N}(x)|^2$ and $\langle a_N \rangle = \frac{1}{N} \sum_{x \in V_N} a_N(x)$.
\end{thm}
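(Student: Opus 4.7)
The plan is to follow the quantum ergodicity strategy devised in \cite{ALM,A} for the adjacency matrix alone, and to incorporate the random potential by exploiting the decay of Green's functions on $\mathbb{T}_q$ in the AC regime, as proved in \cite{Klein,FHS,AW,AW2}.

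First, by Cauchy--Schwarz, I would reduce the statement to bounding the quantum variance
\[
V_N(\omega) := \frac{1}{N}\sum_{\lambda_i^{\omega_N}\in I}\bigl|\langle \psi_i^{\omega_N}, A_N\, \psi_i^{\omega_N}\rangle\bigr|^2,
\]
where $I=(-\lambda_0,\lambda_0)$ and $A_N$ denotes multiplication by $a_N-\langle a_N\rangle$. Since the eigenfunctions of $H_N^\omega$ diagonalize any polynomial in $H_N^\omega$, I would replace $A_N$ by a spectrally averaged observable of the form $K_{N,T}^\omega = \frac{1}{T+1}\sum_{k=0}^T P_k(H_N^\omega)^* A_N P_k(H_N^\omega)$, for Chebyshev-type polynomials $P_k$ satisfying $P_k(\lambda)\approx 1$ on $I$; on a regular graph this corresponds to expanding in powers of the non-backtracking operator as in \cite{ALM,A}. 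The quantum variance is then bounded by the Hilbert--Schmidt norm, $V_N(\omega)\le \frac{1}{N}\|\chi_I(H_N^\omega)K_{N,T}^\omega\chi_I(H_N^\omega)\|_{HS}^2$, and the $T\to\infty$ averaging is precisely what kills the diagonal contribution that $A_N$ alone would leave.

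Next I would expand the Hilbert--Schmidt norm as a double sum $\sum_{x,y\in V_N}[a_N(x)-\langle a_N\rangle][a_N(y)-\langle a_N\rangle]\, F_{N,T}^\omega(x,y)$, where $F_{N,T}^\omega(x,y)$ involves sums of squared matrix elements of polynomials in $H_N^\omega$ between $x$ and $y$. By \textbf{(BST)}, the set of pairs $(x,y)$ whose joint neighborhood is not tree-like is negligible, so for most pairs one can lift the computation to $\mathbb{T}_q$ and re-express $F_{N,T}^\omega(x,y)$ through the random Green's function $G_\eps^\omega(v,w;\lambda+i0)$ on the tree via a contour representation of $\chi_I$. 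The expander condition \textbf{(EXP)} is what makes it legitimate to isolate the bulk of the spectrum $I$ uniformly in $N$ and to control the remainder of the spectral cutoff.

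The quantitative heart of the argument is then a decay estimate of the form $\mathbb{E}[|G_\eps^\omega(v,w;\lambda+i0)|^2]\le Ce^{-c\,d(v,w)}$, uniform in $\lambda\in I$; for $|\eps|<\eps(\lambda_0)$ small enough, this is precisely what the AC analysis of \cite{Klein,AW,AW2} provides (passing from fractional-moment bounds to the relevant second-moment bound using compactness of $\supp \nu$ and a Wegner/Combes--Thomas input). Summed against the volume growth $q^r$ of the spheres in $\mathbb{T}_q$ and against $\|a_N\|_\infty\le 1$, this gives $\mathbb{E}[V_N]\to 0$. Almost-sure convergence under $\cP$ is then obtained by a Borel--Cantelli argument along a subsequence together with monotonicity in the spectral cutoff.

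The hard part will be making the last step quantitative: obtaining second-moment Green's function decay with a rate $c>\log q$, uniformly for $\lambda$ in a compact subinterval of $(-2\sqrt q,2\sqrt q)$ (not just for a.e. $\lambda$), and threading this estimate through the Hilbert--Schmidt expansion while taking the two independent limits $\eta\to 0$ in the Green's function and $T\to\infty$ in the time average. Matching the spectral projector $\chi_I(H_N^\omega)$ on $G_N$ with the resolvent representation on the cover, and controlling the contribution of energies near the edges of $I$, is where most of the real work will lie.
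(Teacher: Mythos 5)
Your proposal takes a fundamentally different route from the paper, and unfortunately contains a genuine gap at its quantitative core. The paper does \emph{not} rerun the quantum ergodicity machinery: it simply verifies the two hypotheses of the companion paper \cite{AS2} — namely that $(G_N,W^{\omega_N})$ has $[\mathbb{T}_q,o,\cW_\eps^\omega]$ as its Benjamini--Schramm limit $\cP$-a.s.\ (Proposition~\ref{p:example}, a fourth-moment law of large numbers argument exploiting the locality of the observables and \textbf{(BST)}), and that the limit measure satisfies assumption \textbf{(Green)} on $(-\lambda_0,\lambda_0)$ at weak disorder (Proposition~\ref{p:AW}, using the moment and continuity estimates of Klein \cite{Klein} and Aizenman--Warzel \cite{AW2}) — and then invokes \cite[Theorems 1.1, 1.3]{AS2} as a black box, with a small additional lemma to pass from $\langle K\rangle_{\lambda+i\eta_0}$ to the explicit $\langle a_N\rangle$. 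You instead propose to redo the variance estimate directly à la \cite{ALM,A}; this is legitimate in principle, but your version of the ``quantitative heart'' fails.

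The gap: you propose to close the argument with a second-moment decay bound $\expect\bigl[|\cG_\eps^\omega(v,w;\lambda+i0)|^2\bigr]\le C\,e^{-c\,d(v,w)}$ with $c>\log q$, so that summing against the sphere volume $\sim q^{d}$ gives decay. But such a rate is \emph{unattainable} in the AC regime. Already for $\eps=0$ one has $\hat\zeta^{\lambda+i0}_{o'}(o)=\frac{\lambda-i\sqrt{4q-\lambda^2}}{2q}$, of modulus exactly $q^{-1/2}$, so $|\cG_0(v,w;\lambda+i0)|\asymp q^{-d(v,w)/2}$ and $\expect[|\cG|^2]\asymp q^{-d}$: the decay rate is exactly $\log q$, and the sum against $q^d$ is $O(d)$, not $o(1)$. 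Turning on weak disorder cannot improve this; on the contrary, absolutely continuous spectrum is equivalent to precisely this borderline $q^{-d/2}$ behaviour. The whole point of \cite{ALM,A,AS2} is that one must not rely on absolute decay of Green's function moments but on \emph{cancellations}: the non-backtracking expansion you mention is used to rewrite the variance so that \textbf{(EXP)} — understood as a spectral gap, hence exponential mixing, for the non-backtracking walk, not merely as ``isolating the bulk of the spectrum'' — provides the smallness, after the diagonal part has been removed by the $T\to\infty$ averaging. Correspondingly, the actual Green's function input needed is assumption \textbf{(Green)}, which is a \emph{lower} bound $\expect(|\Im\hat\zeta_o^{\lambda+i\eta}(o')|^{-s})\le C_s$ (all $s>0$, uniformly in $\eta\in(0,1)$), not an upper decay bound; Proposition~\ref{p:AW} shows this is what Klein's estimates actually deliver. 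Without replacing the brute-force decay step by a cancellation mechanism of this type, the proposal does not close.
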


We next consider eigenfunction correlators. Here we assume the $(\psi_i^{\omega_N})_{i=1}^{N}$ are real-valued. More precisely, we need $\overline{\psi_i^{\omega_N}(x)}\psi_i^{\omega_N}(y)\in \R$ for all $i\le N$ and $x\sim y\in V_N$.

\begin{thm}                    \label{t:thm3}
Assume that \textbf{\emph{(POT)}}, \textbf{\emph{(EXP)}} and \textbf{\emph{(BST)}} hold. Given $(\omega_N)\in \widetilde{\Omega}$, let $(\psi_i^{\omega_N})_{i=1}^N$ be an orthonormal basis of eigenfunctions of $H_N^{\omega}$ in $\ell^2(V_N)$, with corresponding eigenvalues $(\lambda_i^{\omega_N})_{i=1}^N$. Assume the $(\psi_i^{\omega_N})_{i=1}^{N}$ are real-valued.

Let $K_N:V_N\times V_N \to \C$ be independent of $(\omega_N)$. Assume $\sup_N\sup_{x,y \in V_N} |K_N(x,y)| \le 1$ and $K_N(x,y)=0$ if $d(x,y)>R$. Fix $0<\lambda_0<2\sqrt{q}$. There exists $\eps(\lambda_0)>0$ such that if $|\eps|<\eps(\lambda_0)$, we have for $\mathcal{P}$-a.e. $(\omega_N)$,
\[
\lim_{\eta_0\downarrow 0} \lim_{N \to \infty} \frac{1}{N} \sum_{\lambda_i^{\omega_N}\in (-\lambda_0,\lambda_0)} \big|\langle \psi_i^{\omega_N},K_N \psi_i^{\omega_N} \rangle - \langle K_N \rangle_{\lambda_i^{\omega_N}}^{\eta_0} \big|  = 0 \, ,
\]
where
\begin{equation}\label{e:Klambda2}
\langle K \rangle_{\lambda}^{\eta_0} = \sum_{x,y \in V_N} K(x,y) \widetilde{\Phi}_{\lambda+i\eta_0}(\tilde{x},\tilde{y})  \quad \text{and} \quad \widetilde{\Phi}_{\gamma}(\tilde{x},\tilde{y}) = \frac{1}{N} \cdot \frac{\expect\left[\Im \mathcal{G}_{\eps}^{\gamma}(\tilde{x},\tilde{y})\right]}{\expect\left[\Im \mathcal{G}_{\eps}^{\gamma}(o,o)\right]} \, .
\end{equation}
\end{thm}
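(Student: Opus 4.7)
The strategy is to adapt the quantum variance method of \cite{ALM,A}, originally developed for $K_N(x,y)=a_N(x)\delta_{xy}$ and no potential, to general finite-range kernels in the presence of the random potential. By Cauchy--Schwarz, it suffices to establish the $L^2$ (quantum variance) estimate: for each $\eta_0>0$,
\[
\limsup_{N\to\infty}\,\frac{1}{N}\sum_{\lambda_i^{\omega_N}\in(-\lambda_0,\lambda_0)} \bigl|\langle\psi_i^{\omega_N},K_N\psi_i^{\omega_N}\rangle - \langle K_N\rangle_{\lambda_i^{\omega_N}}^{\eta_0}\bigr|^2 \le \delta(\eta_0),
\]
with $\delta(\eta_0)\to 0$ as $\eta_0\downarrow 0$. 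Almost sure convergence in $(\omega_N)$ is then deduced from a bound on the $\cP$-expectation by Chebyshev and Borel--Cantelli, using the product structure of $\cP_N$.

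To estimate the $\cP$-expected variance, I would pass to the resolvent. A Poisson smoothing together with a positive lower bound on the local density of states (ensured by \textbf{(EXP)} and \textbf{(POT)}) lets us bound the above discrete sum by $\int_{-\lambda_0}^{\lambda_0} V(\lambda,\eta_0)\,d\lambda$ for an appropriate spectral quantity. The spectral identity
\[
\sum_i \frac{\eta_0/\pi}{(\lambda-\lambda_i^{\omega_N})^2+\eta_0^2}\langle\psi_i^{\omega_N},K_N\psi_i^{\omega_N}\rangle = \frac{1}{\pi}\sum_{x,y}K_N(x,y)\,\Im G_N^\omega(x,y;\lambda+i\eta_0)
\]
and its squared analogue express $V(\lambda,\eta_0)$ as a sum over quadruples $(x,y,x',y')$ with $d(x,y),d(x',y')\le R$ of $\cP$-expectations of products of two off-diagonal Green function matrix elements of $H_N^\omega$ at spectral parameters $\lambda\pm i\eta_0$.

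By \textbf{(BST)}, all but $o(N)$ vertices have their $R$-neighborhood in $G_N$ isomorphic to a ball in $\mathbb{T}_q$. On such tree-like sites, a Schur complement / resolvent perturbation argument allows one to replace $G_N^\omega(x,y;\gamma)$ by the tree Anderson Green function $\mathcal{G}^\gamma_\eps(\tilde x,\tilde y)$, the errors being controlled by the fractional-moment estimates of \cite{AW,AW2,FHS} valid at weak disorder inside $(-2\sqrt q,2\sqrt q)$. Since the law of $\mathcal{G}^\gamma_\eps(\tilde x,\tilde y)$ is invariant under automorphisms of $\mathbb{T}_q$, its expectation reproduces the centering $\langle K_N\rangle^{\eta_0}_\lambda$ once normalized by the smoothed density of states $\frac{1}{\pi}\expect[\Im\mathcal{G}^{\lambda+i\eta_0}_\eps(o,o)]$, in view of the defining formula for $\widetilde\Phi_{\lambda+i\eta_0}$.

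The main obstacle, and the place where the weak-disorder hypothesis is truly crucial, is obtaining effective decorrelation between the two Green function factors as the pairs $(x,y)$ and $(x',y')$ are pulled apart, i.e.\ a quantitative estimate
\[
\bigl|\expect[\Im\mathcal{G}^{\lambda+i\eta_0}_\eps(\tilde x,\tilde y)\,\Im\mathcal{G}^{\lambda-i\eta_0}_\eps(\tilde x',\tilde y')] - \expect[\Im\mathcal{G}^{\lambda+i\eta_0}_\eps(\tilde x,\tilde y)]\,\expect[\Im\mathcal{G}^{\lambda-i\eta_0}_\eps(\tilde x',\tilde y')]\bigr|
\]
decaying faster than $q^{-d}$ in the tree distance $d$ between the pairs, uniformly in $\eta_0$. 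Such a bound should follow from the factorization and exponential decay of tree Green functions in the AC regime established in the above references. Once it is in place, the diagonal contribution $(x,y)=(x',y')$ cancels against the centering and the off-diagonal contribution is $o(1)$, yielding $\delta(\eta_0)$; the outer limit $\eta_0\downarrow 0$ is then immediate from the definition of $\langle K_N\rangle^{\eta_0}_\lambda$.
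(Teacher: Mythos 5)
Your proposal takes a fundamentally different route from the paper, and the route as sketched has gaps that would be very hard to close. The paper does not reprove quantum ergodicity from scratch: it treats the main theorem of \cite{AS2} as a black box, which yields the conclusion once three inputs are verified, namely \textbf{(EXP)}, a Benjamini--Schramm convergence condition \textbf{(BSCT)} for the colored graphs $(G_N,W^{\omega_N})$, and the Green function moment condition \textbf{(Green)} on the limit measure. The work of the present paper consists of: Proposition~\ref{p:example}, which verifies \textbf{(BSCT)} $\cP$-a.s.\ via a fourth-moment strong law of large numbers (exploiting finite-range dependence of the summands and \textbf{(BST)}); Proposition~\ref{p:AW}, which verifies \textbf{(Green)} on $(-\lambda_0,\lambda_0)$ at weak disorder using the results of \cite{Klein,AW2}; and Lemma~\ref{lem:newav}, which replaces the $\omega$-dependent centering $\langle K\rangle_{\lambda+i\eta_0}$ (built from the lifted finite-graph Green function $\tilde g_N^\gamma$) by the deterministic centering $\langle K\rangle_\lambda^{\eta_0}$ (built from $\expect[\Im\mathcal G_\eps^\gamma]$), again via a law-of-large-numbers argument together with a polynomial approximation of the resolvent. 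Your sketch addresses none of these steps.

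Instead you attempt to reconstruct the proof of \cite[Theorems 1.1, 1.3]{AS2} directly, and this is where the gaps appear. First, the positive lower bound on the smoothed density of states is \emph{not} supplied by \textbf{(EXP)} and \textbf{(POT)}; it is precisely the content of condition \textbf{(Green)} and requires the nontrivial inputs from \cite{Klein,AW2} established in Proposition~\ref{p:AW}. Second, and more seriously, your central claim that the covariance $\expect[\Im\mathcal G\,\Im\mathcal G] - \expect[\Im\mathcal G]\,\expect[\Im\mathcal G]$ decays faster than $q^{-d}$ and that this alone kills the off-diagonal quadruple sum is unsubstantiated and almost certainly insufficient. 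In the absolutely continuous regime, the tree Green function $\mathcal G^\gamma_\eps(\tilde x,\tilde y)$ decays only like $q^{-d(\tilde x,\tilde y)/2}$ on average (this slow decay is what makes AC spectrum possible), so ``exponential decay of tree Green functions in the AC regime'' is not available; moreover a careful count of the quadruples $(x,y,x',y')$ against the $1/N$ normalization shows that a bare $q^{-(1+\delta)d}$ covariance decay would still leave an $O(1)$ contribution unless additional cancellations are exploited. The actual mechanism in \cite{AS2} is of a different nature, resting on an expansion of the variance in terms of non-backtracking operators and spherical sums, a contour-integral/functional-calculus representation of the spectral projectors, and operator-norm bounds in which the spectral gap from \textbf{(EXP)} enters through the non-backtracking transfer operator; this is not reducible to a pointwise decorrelation estimate for products of Green functions. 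Finally, your closing claim that the outer limit $\eta_0\downarrow 0$ is ``immediate'' is not: in the paper's argument this is handled inside \cite{AS2} and by Lemma~\ref{lem:newav}, and it interacts nontrivially with the replacement of $\tilde g_N^\gamma$ by $\expect[\Im\mathcal G_\eps^\gamma]$.

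In short, the paper's proof is a verification of the hypotheses of \cite{AS2} plus one averaging lemma; your proposal bypasses \cite{AS2} and attempts a direct variance argument, but the decorrelation estimate at its heart is unjustified and does not reflect the actual mechanism, and the sketch omits both the verification of \textbf{(Green)} and the passage from the $\omega$-dependent to the deterministic centering.
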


Here, $\tilde{x},\tilde{y}\in\mathbb{T}_q$ are lifts of $x,y\in V_N$ satisfying $d_{\mathbb{T}_q}(\tilde{x},\tilde{y})=d_{G_N}(x,y)$, and $\cG_{\eps}^{\gamma}(v,w) = \langle \delta_v, (\cH_{\eps}^{\omega}-\gamma)^{-1}\delta_w\rangle$ is the Green function of $\cH_{\eps}^{\omega}$.

Note that $\expect\left[ \Im \cG^{\gamma}_{\eps}(v,w) \right]=\expect\left[ \Im \cG^{\gamma}_{\eps}(o, u) \right]$ if $d(o, u)=d(v,w)$.

If $R=0$, we have $\langle a_N \rangle_{\lambda_i}^{\eta_0} = \frac{1}{N} \sum_{x\in V_N} a_N(x) = \langle a_N\rangle$.

\subsection{Consequences}
Denote $\psi_j = \psi_j^{\omega_N}$ and $\lambda_j=\lambda_j^{\omega_N}$. By a simple application of Markov's inequality, one may deduce from Theorem~\ref{thm:1} that for any $\varsigma>0$,
\[
\frac{1}{N} \# \left\{ \lambda_j \in (-\lambda_0,\lambda_0): |\langle \psi_j, a_N \psi_j\rangle - \langle a_N\rangle | > \varsigma \right\} \to 0
\]
$\cP$-a.s. as $N\to \infty$. So for most $\psi_j$ with eigenvalues in $(-\lambda_0,\lambda_0)$, the quantity $\langle \psi_j, a_N \psi_j\rangle$ approaches $\langle a_N\rangle$. If one takes $a_N = \delta_x$, this seems to imply that $|\psi_j(x)|^2 \approx \frac{1}{N}$ for any $x\in V_N$. However, one should pay attention to the speed of convergence. The best we can achieve is a negative power of the girth (see \cite{A}), which typically grows like $\log N$. Hence, if we wanted to apply the result to $a_N = \delta_x$, we would obtain $| |\psi_j(x)|^2 - \frac{1}{N}|^2 \lesssim \frac{1}{\log N}$, which does not imply that $|\psi_j(x)|^2 \approx \frac{1}{N}$. However, we may take $a_N$ to be the characteristic function of any $\Lambda_N \subset G_N$ of size $\alpha N$, with $0 < \alpha < N$ (e.g. $\alpha=\frac{1}{2}$). In this case, we obtain $| \|\chi_{\Lambda_N} \psi_j\|^2 - \alpha|^2 \lesssim \frac{1}{\log N}$, which implies that $\|\chi_{\Lambda_N} \psi_j\|^2 \approx \alpha$.

So Theorem~\ref{thm:1} implies that in weak disorder, most $\psi_j$ with eigenvalues in $(-\lambda_0,\lambda_0)$ are uniformly distributed in this sense~: if we consider any $\Lambda_N\subset G_N$ containing half the vertices of $G_N$, without any restriction on the shape of $\Lambda_N$, we find half the mass of $\|\psi_j\|^2$.

More generally, Theorem~\ref{t:thm3} implies that the correlation $\overline{\psi_j(x)}\psi_j(y)$ approaches the function $\widetilde{\Phi}_{\lambda_j+i0}(\tilde{x},\tilde{y})$ for large $N$.

A more detailed discussion of the consequences of our results is given in \cite{AS2}.

In the physics literature, a strong delocalization criterion is dubbed ``ergodicity''. In the ergodic phase, the inverse participation ratio $\|\psi_j\|_{2p}^{2p}$ should behave asymptotically like $\frac{1}{N^{p-1}}$. There seems to be a divergence among physicists concerning this issue for the Anderson model. Analytical predictions \cite{FM} first suggested that states in the delocalized phase are ergodic. Numerical evidence \cite{Altshuler1,Altshuler2} then put forward a different behavior of ``multi-fractality''. Finally, papers \cite{TMS,MC} supported the original predictions of ergodicity. In any case, our results do not allow us to settle this question. If we could zoom in at every $x$ and show that $|\psi_j(x)|^2 \approx \frac{1}{N}$, we would have $\|\psi_j\|_{2p}^{2p} \approx \frac{1}{N^{p-1}}$. But as mentioned above, we have to consider macroscopic regions $\Lambda_N$ instead.

\begin{rem}
In the main results, we assumed the ``test-observables'' $K_N$ are independent of $(\omega_N)$. This suffices to obtain the consequences discussed above. We can actually allow $K_N$ to depend on $(\omega_N)$, but in this case, we have to replace the average $\langle K_N\rangle_{\lambda}^{\eta_0}$ by the more complicated quantity \eqref{e:Klambda}. This remark is pertinent in particular for numerical tests \cite{Altshuler1,Altshuler2}, where one first picks a realization of the potential, then tests the ergodicity of eigenfunctions, so that the observable depends on $(\omega_N)$.
\end{rem}

\begin{rem}\label{rem:otherregions}
It was shown in \cite{AW} that AC spectrum exists outside $[-2\sqrt{q},2\sqrt{q}]$. It is natural to ask whether quantum ergodicity also holds in these regions. We do not answer this question here. Still, we may replace the interval $(-\lambda_0,\lambda_0)$ in our main results by any open set $I_1$ in which \cite[condition \textbf{(Green)}]{AS2} is true. For the Anderson model, this condition is the following.

Let $\hat{\zeta}_w^{\gamma}(v) = -\cG^{(v|w)}_{\eps}(v,v;\gamma)$ be the Green function on the subtree $\mathbb{T}_q^{(v|w)}$ in which the branch emanating from $v$, passing by $w\in\mathcal{N}_v$ is removed. Condition \textbf{(Green)} is said to hold on $I_1$ if
$\sup_{\lambda\in I_1,\eta\in(0,1)}\expect(|\Im \hat{\zeta}_o^{\lambda+i\eta}(o')|^{-s}) \le C_s<\infty$ for any $s>0$. Here, $o'$ is any neighbor of $o$. If \textbf{(POT)} holds, we show in Proposition~\ref{p:AW} that \textbf{(Green)} is equivalent to~:
\[
\inf_{\lambda\in I_1,\eta\in(0,1)} \expect\left[|\Im \hat{\zeta}_{o'}^{\lambda+i\eta}(o))|\right] \ge c >0 \quad \text{and} \quad \sup_{\lambda\in I_1,\eta\in(0,1)} \expect\left[(\Im \hat{\zeta}_{o'}^{\lambda+i\eta}(o))^2\right] \le C < \infty \,.
\]
The fact that \textbf{(Green)} implies both conditions is simple. The converse is nontrivial and uses important estimates from \cite{AW2}. The results of \cite{Klein} imply that both conditions hold true in proper subsets of $(-2\sqrt{q},2\sqrt{q})$ if the disorder is weak, but they may well be satisfied beyond this region.
\end{rem}

\section{Proof of the main results}

Our results follow from the main theorem in \cite{AS2}, which ensures quantum ergodicity once assumptions \textbf{(EXP)}, \textbf{(BSCT)} and \textbf{(Green)} are satisfied. We shall thus prove that the Anderson model satisfies \textbf{(BSCT)} for $\cP$-a.e. $(\omega_N)$ and that the local weak limit satisfies \textbf{(Green)}. We conclude by showing that the general average $\langle K \rangle_{\lambda+i\eta_0}$ appearing in \cite{AS2} can be replaced by the simpler average $\langle K \rangle_{\lambda}^{\eta_0}$ in the context of Theorem~\ref{t:thm3}.

We refer the reader to \cite[Section 1.6]{AS2} for a brief sketch of the strategy we follow to prove quantum ergodicity once this input has been checked.

A \emph{colored rooted graph} $(G,o,W)$ is a graph $G=(V,E)$ with a root $o$ and a map $W:V\to \R$, which we call ``coloring''. Let $\mathscr{G}_{\ast}^{D,A}$ be the set of (isomorphism classes of) colored rooted graphs $[G,o,W]$ with degree bounded by $D$ and coloring in $[-A,A]$. The set $\mathscr{G}_{\ast}^{D,A}$ is endowed with a topology, see for instance \cite[Appendix A]{AS2}. Checking \textbf{(BSCT)} means proving that, for any continuous $f:\mathscr{G}_{\ast}^{D,A}\to \R$, we have the convergence
\begin{equation}        \label{eq:lln}
\lim_{N\to \infty} \frac{1}{N} \sum_{x\in G_N} f([G_N,x,W^{\omega_N}]) =  \int_{\mathscr{G}_{\ast}^{D, \eps A}} f([G,v,W])\,\dd\, \mathbb{P}([G,v,W])\,,
\end{equation}
where $\mathbb{P}$ is some probability measure on $\mathscr{G}_{\ast}^{D,A}$. In Proposition~\ref{p:example} we prove that this holds for $\cP$-a.e. $(\omega_N)$, with $\mathbb{P}$ defined by \eqref{e:defP}. 
 
This says that the Anderson model on $\mathbb{T}_q$ is the \emph{Benjamini-Schramm limit} of the sequence $(G_N,W^{\omega_N})$ for $\cP$-a.e. $(\omega_N)$. By general arguments (e.g. \cite[Appendix A]{AS2}), this implies in particular that the empirical spectral measures of $(G_N,W^{\omega_N})$ converge $\cP$-almost surely to the integrated density of states of $\cH_{\eps}^{\omega}$ on $\mathbb{T}_q$.

\begin{prp} \label{p:example}
If $(G_N)$ satisfies \emph{\textbf{(BST)}}, then for $\cP$-a.e. $(\omega_N)\in\widetilde{\Omega}$, the sequence $(G_N,W^{\omega_N})$ has a local weak limit $\mathbb{P}$ which is concentrated on $\{[\mathbb{T}_q, o, \cW^{\,\omega}_{\eps}]:\omega\in\Omega\}$, where $o\in \mathbb{T}_q$ is fixed and arbitrary. The measure $\mathbb{P}$ acts by taking the expectation w.r.t. $\prob$, that is, if $D=q+1$, then
\begin{equation}\label{e:defP}
\int_{\mathscr{G}_{\ast}^{D, \eps A}} f([G,v,W])\,\dd\, \mathbb{P}([G,v,W]) = \int_{\Omega}f([\mathbb{T}_q,o,\cW^{\,\omega}_{\eps}])\,\dd\prob(\omega) = \expect\left[f([\mathbb{T}_q,o,\cW^{\,\omega}_{\eps}])\right]\,.
\end{equation}
\end{prp}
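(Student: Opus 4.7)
The plan is to verify the Benjamini-Schramm convergence \eqref{eq:lln} for a countable uniformly dense family of continuous test functions and then combine the resulting almost-sure statements. Since $\mathscr{G}_{\ast}^{D,A}$ is compact and metrizable, the space $C(\mathscr{G}_{\ast}^{D,A})$ is separable, and its continuous \emph{local} functions---those factoring through the projection $[G,o,W]\mapsto [B_G(o,r),o,W\!\restriction_{B_G(o,r)}]$ for some finite $r$---are uniformly dense in it (the topology on $\mathscr{G}_{\ast}^{D,A}$ is generated by these projections). I would therefore fix once and for all a countable family $(f_k)_{k\in\N}$ of continuous local functions that is uniformly dense in $C(\mathscr{G}_{\ast}^{D,A})$, and prove \eqref{eq:lln} for each $f_k$ outside a $\cP$-null set; intersecting countably many null sets and passing to uniform limits then gives \eqref{eq:lln} for every continuous $f$.

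Fix a continuous local function $f$ of range $r$, and set $Z_N(\omega_N) := \frac{1}{N}\sum_{x\in V_N} f([G_N,x,W^{\omega_N}])$. By \textbf{(BST)}, for every $\delta>0$ and $N$ large enough, at least $(1-\delta)N$ vertices $x\in V_N$ have injectivity radius $\rho_{G_N}(x)\ge r$, so that $(B_{G_N}(x,r),x)$ is graph-isomorphic to $(B_{\mathbb{T}_q}(o,r),o)$. Under $\cP_N$ the coordinates $(\omega_y)_{y\in V_N}$ are i.i.d.\ with law $\nu$, so for such an $x$ the colored rooted ball around $x$ has the same distribution under $\cP_N$ as the colored rooted ball around $o$ in $(\mathbb{T}_q,\cW^{\,\omega}_{\eps})$ has under $\prob$, and hence $\expect_{\cP_N}[f([G_N,x,W^{\omega_N}])] = \expect[f([\mathbb{T}_q,o,\cW^{\,\omega}_{\eps}])]$. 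Averaging over $x$, letting $N\to\infty$ and then $\delta\to 0$, one obtains
\[
\lim_{N\to\infty}\expect_{\cP_N}[Z_N] = \expect\bigl[f([\mathbb{T}_q,o,\cW^{\,\omega}_{\eps}])\bigr],
\]
which is exactly the right-hand side of \eqref{eq:lln} with $\mathbb{P}$ defined by \eqref{e:defP}.

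For concentration, I would invoke McDiarmid's bounded-differences inequality. Because $G_N$ is $(q+1)$-regular, every $y\in V_N$ belongs to at most $C_r := |B_{G_N}(y,r)|$ balls of radius $r$, where $C_r$ depends only on $q$ and $r$; hence changing a single coordinate $\omega_y$ modifies $N Z_N$ by at most $2 C_r \|f\|_\infty$. McDiarmid applied to $Z_N$ as a function of the $N$ independent variables $(\omega_y)_{y\in V_N}$ then yields
\[
\cP_N\bigl(\,\bigl|Z_N - \expect_{\cP_N}[Z_N]\bigr| > \eta\,\bigr) \le 2\exp\!\left(-\frac{\eta^2 N}{2 C_r^2 \|f\|_\infty^2}\right)
\]
for every $\eta>0$. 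Since $\cP$ has marginal $\cP_N$ on $\Omega_N$, the same bound holds under $\cP$; it is summable in $N$, so Borel-Cantelli gives $Z_N \to \expect[f([\mathbb{T}_q,o,\cW^{\,\omega}_{\eps}])]$ $\cP$-almost surely. Intersecting the resulting full-measure sets over $k\in\N$ and then using uniform density of $(f_k)$ proves \eqref{eq:lln} $\cP$-almost surely for every continuous $f$, which is the claim.

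I do not foresee a serious obstacle: the argument is a standard law-of-large-numbers estimate for additive local statistics on locally tree-like sparse graphs, and the concentration input could equally well be supplied by a second- or fourth-moment variance bound exploiting the independence of $f([G_N,x,W^{\omega_N}])$ and $f([G_N,x',W^{\omega_N}])$ whenever $d_{G_N}(x,x')>2r$. The only mildly delicate point is the exchange of quantifiers between ``for $\cP$-a.e.\ $(\omega_N)$'' and ``for every $f\in C(\mathscr{G}_{\ast}^{D,A})$'', and this is handled by the separability of the target space together with a countable-density argument as above.
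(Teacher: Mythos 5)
Your proposal is correct and follows the same overall architecture as the paper's proof: reduce to a countable, uniformly dense family of continuous local observables; for each one, obtain a concentration estimate and apply Borel--Cantelli; and then use \textbf{(BST)} together with the isomorphism of colored $r$-balls at vertices of large injectivity radius to identify the deterministic limit with $\expect[f([\mathbb{T}_q,o,\cW^{\,\omega}_{\eps}])]$. The genuine point of departure is the concentration step. The paper centers the empirical average, writing $S_N = \frac{1}{N}\sum_{x\in V_N} Y_x$ with $Y_x = f([G_N,x,W^{\omega_N}]) - \mathcal{E}[f([G_N,x,W^{\omega_N}])]$, and establishes the fourth-moment bound $\mathcal{E}(|S_N|^4) \le C(r,f) N^{-2}$ by exploiting only the finite-range independence of the $Y_x$ (independent whenever $d_{G_N}(x,y) > 2r$); this polynomial bound is summable over $N$ and feeds into Borel--Cantelli. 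You instead apply McDiarmid's bounded-differences inequality directly to $Z_N$ as a function of the $N$ i.i.d.\ coordinates $(\omega_y)_{y\in V_N}$, getting the exponential tail $2\exp\bigl(-\eta^2 N/(2C_r^2\|f\|_\infty^2)\bigr)$. Both are valid. McDiarmid gives a much stronger rate but leans on the full product structure of $\cP_N$ and the explicit bounded-differences form of the statistic, whereas the fourth-moment method is more elementary and relies only on the finite-range dependence of the summands, so it would survive in settings where the coloring is merely $m$-dependent rather than i.i.d. In either case the $\cP$-a.s.\ conclusion for all continuous $f$ follows by intersecting countably many full-measure sets and passing to uniform limits, exactly as you indicate.
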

\begin{proof}
 Since $\eps$ is fixed and plays no role here, we omit it from the notation.
 
Consider the set $\mathscr{A}$ of continuous functions on $\mathscr{G}_{\ast}^{D,A}$, which ``depend only on a finite-size neighborhood of the origin''. That is, let $\mathscr{A} = \cup_{r\in \N} \mathscr{A}_r$, where
\begin{multline*}
\mathscr{A}_r = \big\{ f:\mathscr{G}_{\ast}^{D,A}\to \R : f \text{ is continuous and } f([G,v,W]) = f([G',v',W']) \\
 \text{ if } [B_G(v,r),v,W]=[B_{G'}(v',r),v',W'] \big\} \, .
\end{multline*}
Then $\mathscr{A}$ is an algebra of continuous functions containing $1$ which separates points.

Using the compactness of $\mathscr{G}_{\ast}^{D,A}$, cf. \cite[Section A.1]{AS2}, it suffices to show that there exists $\Omega_0 \subseteq \widetilde{\Omega}$ with $\mathcal{P}(\Omega_0)=1$ such that for any $(\omega_N) \in \Omega_0$ and any $f\in \mathscr{A}$ the limit in \eqref{eq:lln} holds true.
For this, we adapt the strong law of large numbers for random variables in $L^4$ (see e.g. \cite{Durr}). Denote by $\mathcal{E}$ the expectation w.r.t. $\mathcal{P}$. Given $f\in \mathscr{A}_r$, let
\[
Y_x=Y_x^N = f([G_N,x,W^{\omega_{N}}]) - \mathcal{E}\left[f([G_N,x,W^{\omega_{N}}])\right] \quad \text{and} \quad S_N = \frac{1}{N}\sum_{x\in V_N} Y_x \, .
\]
Then $Y_x^N$ only depends on $(\omega_z)_{z\in B_{G_N}(x,r)}$, since $f([G_N,x,W^{\omega_N}]) = f([G_N,x,\tilde{W}^{\omega_N}])$ if $W^{\omega_N} = \tilde{W}^{\omega_N}$ on $B_{G_N}(x,r)$. Hence, $Y_x^N$ and $Y_y^N$ are independent if $d_{G_N}(x,y)>2r$. Moreover, each $Y_x^N$ is bounded by $2\|f\|_\infty$. Now
\begin{multline*}
\cE\bigg[\sum_{x\in V_N} Y_x\bigg]^4 =  \sum_{x\in V_N} \cE(Y_x^4) +6\sum_{x,y\in V_N}\cE(Y_x^2 Y_y^2) +4\sum_{x,y\in V_N}\cE(Y_x Y_y^3+ Y_y Y_x^3)\\+12\,
\sum_{x,y,z\in V_N}\cE(Y_x Y_y Y_z^2+ Y_x Y^2_y Y_z+Y^2_x Y_y Y_z ) + 24\sum_{x,y,z,t\in V_N } \cE(Y_x Y_y Y_z Y_t) \,.
\end{multline*}
The terms involving $\sum_{x\in V_N}$ and $\sum_{x,y\in V_N}$ are respectively $O(N)$ and $O(N^2)$. In the other terms, some cancellations take place due to independence. Indeed, $\cE(Y_x Y_y Y_z^2)$ vanishes as soon as $ d_{G_N}(x,y)>4r$~: in that case, we have either $d_{G_N}(x, z)>2r$ and $Y_x$ is independent of the pair $(Y_y, Y_z)$, or $d_{G_N}(y, z)>2r$ and $Y_y$ is independent of $(Y_x, Y_z)$. Thus we have either $\cE(Y_x Y_y Y_z^2)=\cE(Y_x)\cE( Y_y Y_z^2)=0$ or $\cE(Y_y Y_x Y_z^2)=\cE(Y_y)\cE( Y_x Y_z^2)=0$ .
Hence, $|\sum_{x, y, z\in V_N}\cE(Y_x Y_y Y_z^2)|\leq N^2 \tau_{q,4r}(2\|f\|_\infty)^4$, where $\tau_{q,r}:=|B_{\mathbb{T}_q}(o,r)|$.

Similarly, for $\cE( Y_x Y_y Y_z Y_t)$ to be non zero, each point must be at distance $\leq 2r$ from one of the three others. So we must have [$ d_{G_N}(x,y)\leq 2r$ and $ d_{G_N}(z,t)\leq 2r$] (or a permutation thereof) or [$ d_{G_N}(x, \bullet)\leq 8r$ for $\bullet=y, z, t$]. Hence, $\sum_{x, y, z, t\in V_N }| \cE(Y_x Y_y Y_z Y_t)|\leq 3N^2 \tau_{q,2r}^2(2\|f\|_\infty)^4 + N \tau_{q,8r}^3(2\|f\|_\infty)^4 $. We thus get
$\mathcal{E}(|S_N|^4)\leq C(r, f) N^{-2}$ in all cases, with $r$ fixed.

Using Borel-Cantelli, it follows as in \cite[Theorem 2.3.5]{Durr} that $S_N\to 0$ $\cP$-a.s., i.e. for $(\omega_N)\in \Omega_f$ with $\cP(\Omega_f)=1$. Since $\mathscr{A}$ has a countable dense subset $\{f_j\}$ (e.g. the functions taking only rational values), we let $\Omega_0=\cap_j \Omega_{f_j}$ and get $\cP(\Omega_0)=1$.

Now if $f\in \mathscr{A}$, say $f\in \mathscr{A}_r$, we have
\begin{align}    \label{eq:replace}
& \Big| \frac{1}{N}\sum_{x\in V_N} f([G_N,x,W^{\omega_N}]) - \expect\left[f([\mathbb{T}_q,o,\cW^{\,\omega}])\right]\Big| \nonumber \\
& \qquad \le |S_N| + \Big|\frac{1}{N}\sum_{x\in V_N} \mathcal{E}_N[f([G_N,x,W^{\omega_N}])] - \expect\left[f([\mathbb{T}_q,o,\cW^{\,\omega}])\right]\Big| \, .
\end{align}
If $\rho_{G_N}(x)\ge r$, there is a graph isomorphism $\phi_x:B_{G_N}(x,r) \to B_{\mathbb{T}_q}(o,r)$ with $\phi_x(x)=o$. Denoting $\cW_x^{\,\omega} = W^{\omega_N}\circ\phi_x^{-1}$, we get $[B_{G_N}(x,r),x,W^{\omega_N}] = [B_{\mathbb{T}_q}(o,r),o,\cW_x^{\,\omega}]$, so $f([G_N,x,W^{\omega_N}]) = f([\mathbb{T}_q,o,\cW_x^{\,\omega}])$. Using standard measure-preserving transformations, it follows that $\mathcal{E}_N[f([G_N,x,W^{\omega_N}])] = \expect\left[f([\mathbb{T}_q,o,\cW^{\,\omega}])\right]$. Hence,
\[
\Big| \frac{1}{N}\sum_{x\in G_N} f([G_N,x,W^{\omega_N}]) - \expect[f([\mathbb{T}_q,o,\cW^{\,\omega}])]\Big| \le |S_N| + \frac{\# \{x:\rho_{G_N}(x)<r\}}{N}(2\|f\|_{\infty}) \, .
\]
Taking $N\to \infty$, it follows by \textbf{(BST)} that if $(\omega_N)\in \Omega_0$, then (\ref{eq:lln}) is true for any $f\in \{f_j\}$, the dense subset of $\mathscr{A}$. Arguing as in \cite[Corollary 15.3]{Klenke}, the proof is complete.
\end{proof}

We next show that the measure $\mathbb{P}$ in Proposition~\ref{p:example} satisfies assumption \textbf{(Green)} in $I_1=(-\lambda_0,\lambda_0)$, i.e. $\sup_{\lambda\in I_1,\eta\in (0,1)}\mathbb{E}(\sum_{o'\sim o} |\Im \hat{\zeta}_o^{\lambda+i\eta}(o')|^{-s})<\infty$ for any $s>0$, where $\hat{\zeta}_{o}^{\gamma}(o')=-\cG^{(o'|o)}_{\eps}(o',o';\gamma)$. Since $\mathbf{E}(|\Im\hat{\zeta}_{o}^{\gamma}(o')|^{-s})$ is the same for any $o'\sim o$ and also equal to $\mathbf{E}(|\Im \hat{\zeta}_{o'}^{\gamma}(o)|^{-s})$, the measure $\mathbb{P}$ satisfies \textbf{(Green)} in $I_1$ iff for any $s>0$, we have $\sup_{\lambda\in I_1,\eta\in(0,1)}\expect(|\Im \hat{\zeta}_o^{\lambda+i\eta}(o')|^{-s})<\infty$, where $o'\sim o$ is arbitrary. To check this condition, we use the moment bounds provided in \cite{Klein} and \cite{AW2}.

\begin{prp}  \label{p:AW}
(i) Under assumption \emph{\textbf{(POT)}}, condition \emph{\textbf{(Green)}} holds in $I_1\subseteq\R$ iff
\begin{equation}\label{e:greenequiv}
\inf_{\lambda\in I_1,\eta\in(0,1)} \expect\left[|\Im \hat{\zeta}_{o'}^{\lambda+i\eta}(o))|\right] \ge c >0 \quad \text{and} \quad \sup_{\lambda\in I_1,\eta\in(0,1)} \expect\left[(\Im \hat{\zeta}_{o'}^{\lambda+i\eta}(o))^2\right] \le C < \infty \,.
\end{equation}

(ii) Assuming \emph{\textbf{(POT)}} holds and $0<\lambda_0<2\sqrt{q}$, there exists $\eps(\lambda_0)>0$ such that if $|\eps|<\eps(\lambda_0)$, then assumption \emph{\textbf{(Green)}} holds on $I_1= (-\lambda_0,\lambda_0)$.
\end{prp}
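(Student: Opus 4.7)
The plan is to treat the two parts in order, and then to deduce (ii) from (i) together with the weak-disorder estimates of \cite{Klein}.

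For the easy direction in part (i), namely \textbf{(Green)} $\Rightarrow$ \eqref{e:greenequiv}, I would proceed in two steps. The lower bound on $\expect[|\Im\hat{\zeta}|]$ follows from applying \textbf{(Green)} with $s=1$ and Cauchy--Schwarz:
\[
1 = \expect\bigl[|\Im\hat{\zeta}|^{1/2}\,|\Im\hat{\zeta}|^{-1/2}\bigr]^{2} \le \expect[|\Im\hat{\zeta}|]\,\expect[|\Im\hat{\zeta}|^{-1}]\le C_1\,\expect[|\Im\hat{\zeta}|],
\]
so $\expect[|\Im\hat{\zeta}|]\ge 1/C_1$ uniformly in $\lambda\in I_1$ and $\eta\in(0,1)$. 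The upper bound on $\expect[(\Im\hat{\zeta})^2]$ does not actually use \textbf{(Green)}: I would instead establish a deterministic a priori estimate $|\Im\hat{\zeta}^{\lambda+i\eta}_{o'}(o)|\le M$ for a.e.\ $\omega$ and all $\lambda\in I_1$, $\eta\in(0,1)$, with $M=M(q,A,I_1)$, from the standard tree recursion satisfied by $\hat{\zeta}$ together with the boundedness of the potential in \textbf{(POT)}; this yields $\expect[(\Im\hat{\zeta})^2]\le M^2$.

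The main obstacle will be the converse direction in (i): from only first- and second-moment control on $\Im\hat{\zeta}$ one must deduce finiteness of \emph{every} negative moment, which amounts to ruling out concentration of $\Im\hat{\zeta}$ near $0$ at all scales. My plan is to follow the fractional-moment strategy of \cite{AW2}. First, the Paley--Zygmund inequality applied to \eqref{e:greenequiv} provides a uniform lower bound $\prob(\Im\hat{\zeta}^{\lambda+i\eta}_{o'}(o)\ge c/2)\ge c^2/(4C)$. Then the H\"older continuity of $\nu$ in \textbf{(POT)} enters: in the recursion for $\hat{\zeta}$, a small value of $\Im\hat{\zeta}$ at the root forces a near-cancellation involving $\eps\omega_{o'}$ and the contributions of the $q$ children, an event whose probability under $\nu$ is controlled by its H\"older modulus. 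The key quantitative estimates of \cite{AW2} turn this heuristic into tail bounds $\prob(|\Im\hat{\zeta}|\le t)\le C_\alpha\, t^\alpha$ for every $\alpha>0$, with constants uniform in $\lambda\in I_1$ and $\eta\in(0,1)$, and these tail bounds integrate to $\expect[|\Im\hat{\zeta}|^{-s}]\le C_s$ for every $s>0$, which is exactly \textbf{(Green)}.

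For part (ii), I would appeal to \cite{Klein}: taking $\eps(\lambda_0)>0$ small enough that the interval $[-\lambda_0,\lambda_0]$ sits in the purely AC regime constructed there, Klein's contraction/fixed-point analysis of the distributional recursion for $\hat{\zeta}$ on $\mathbb{T}_q$ yields, uniformly in $\lambda\in(-\lambda_0,\lambda_0)$ and $\eta\in(0,1)$, both a finite upper bound on $\expect[(\Im\hat{\zeta}^{\lambda+i\eta}_{o'}(o))^2]$ and a strictly positive lower bound on $\expect[|\Im\hat{\zeta}^{\lambda+i\eta}_{o'}(o)|]$, i.e.\ precisely \eqref{e:greenequiv}. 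Invoking the equivalence from (i) then delivers \textbf{(Green)} on $I_1=(-\lambda_0,\lambda_0)$, completing the argument.
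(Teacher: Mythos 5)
Your plan is close to the paper's in broad outline, and your use of Paley--Zygmund in place of the paper's direct contradiction argument (to pass from the moment bounds to the event bound $\prob(|\Im\hat{\zeta}|>c')>c'$ needed for the $\sigma_{ac}^\eps(\delta)$ criterion of \cite{AW2}) is a legitimate, essentially equivalent variant. However, there is a genuine error in the forward direction, and a gap in part (ii).

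\textbf{The deterministic a priori bound does not exist.} You assert that $|\Im \hat{\zeta}^{\lambda+i\eta}_{o'}(o)|\le M$ uniformly for a.e.\ $\omega$, $\lambda\in I_1$ and $\eta\in(0,1)$, with $M$ depending only on $q,A,I_1$. This is false. The only elementary deterministic control is $|\Im\hat{\zeta}^{\lambda+i\eta}_{o'}(o)| = \eta\|(\cH_\eps^{(o|o')}-\lambda-i\eta)^{-1}\delta_o\|^2\le \eta^{-1}$, which blows up as $\eta\downarrow 0$. The tree recursion gives $\Im\hat{\zeta}_{o'}^\gamma(o)\le \big(\eta+\sum_{o''\in\mathcal{N}_o\setminus\{o'\}}\Im\hat{\zeta}_o^\gamma(o'')\big)^{-1}$, and when both $\eta$ and the children's imaginary parts are small this is unbounded. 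The resolution (and the paper's actual argument) is that the second-moment bound \emph{does} require \textbf{(Green)}: from the recursion one has $|\hat{\zeta}_{o'}^{\gamma}(o)|^2\le |\Im\hat{\zeta}_o^{\gamma}(o'')|^{-2}$ for any single child $o''$, and then by stationarity $\expect\big[(\Im\hat{\zeta}_{o'}^\gamma(o))^2\big]\le\expect\big[|\Im\hat{\zeta}_o^\gamma(o'')|^{-2}\big]=\expect\big[|\Im\hat{\zeta}_{o'}^\gamma(o)|^{-2}\big]\le C_2$ by \textbf{(Green)} with $s=2$. Your attempt to make the second inequality in \eqref{e:greenequiv} come ``for free'' cannot be repaired without reintroducing \textbf{(Green)}.

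\textbf{Part (ii) glosses over the range of $\eta$.} Klein's continuity theorem gives control of $\expect[|\Im\hat{\zeta}^{\lambda+i\eta}_{o'}(o)|]$ only on a compact set $(-\eps_1,\eps_1)\times[-\lambda_0,\lambda_0]\times[0,\eta_1)$ obtained by continuous extension from the free case at $\eta=0$; it does not by itself give a lower bound uniform over all $\eta\in(0,1)$. The paper supplies the missing piece with a simple deterministic bound from below: $\Im\hat{\zeta}^{\lambda+i\eta}_{o'}(o)=\eta\|(\cH_\eps^{(o|o')}-\lambda-i\eta)^{-1}\delta_o\|^2\ge \eta\,\tilde{c}^{-2}$, where $\tilde{c}$ bounds the relevant operator norm, which handles $\eta\in[\eta_1,1)$. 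Your sketch should include this step; otherwise the infimum in \eqref{e:greenequiv} over the full range $\eta\in(0,1)$ is not established.
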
 
\begin{proof}
The fact that \textbf{(Green)} implies the first condition follows from Jensen's inequality~: $\expect[|\Im \hat{\zeta}_{o'}^{\lambda+i\eta}(o)|^{-1}] \ge \expect[|\Im \zeta_{o'}^{\lambda+i\eta}(o)|]^{-1}$. To see that \textbf{(Green)} implies the second one, recall that $|\hat{\zeta}_{o'}^{\gamma}(o)|^2=|\eps\,\omega_o-\gamma+\sum_{o''\in \mathcal{N}_o\setminus \{o'\}} \hat{\zeta}_{o}^{\gamma}(o'')|^{-2} \le |\Im \hat{\zeta}_o^{\gamma}(o'')|^{-2}$ by classical recursive formulas (see e.g. \cite[Lemma 2.1]{AS}), so the second property follows.

Conversely, given $\delta\in (0,1)$, introduce the set
\[
\sigma_{ac}^{\eps}(\delta)=\{\lambda\in \R: \mathbf{P}(|\Im \hat{\zeta}_{o'}^{\lambda+i\eta}(o)|>\delta)>\delta \quad \forall \eta\in (0,1)\}\,,
\]
where $o'\sim o$ is arbitrary. The arguments of \cite[Theorem 2.4]{AW2} show that if \textbf{(POT)} holds, then assumption \textbf{(Green)} is satisfied on any bounded $I\subset \sigma_{ac}^{\eps}(\delta)$. We now show that if the two conditions in \eqref{e:greenequiv} hold, then $I_1\subseteq \sigma_{ac}^{\eps}(c')$ for some $c'>0$.

Suppose that $\prob(|\Im \hat{\zeta}_{o'}^{\lambda+i\eta}(o)| > c') \le c'$ for some $\lambda\in I_1$ and $\eta\in (0,1)$. Then
\begin{align*}
\expect\left[|\Im \hat{\zeta}_{o'}^{\lambda+i\eta}(o)|\right] & = \expect\left[|\Im \hat{\zeta}_{o'}^{\lambda+i\eta}(o)|1_{|\Im \zeta_{o'}^{\lambda+i\eta}(o)|>c}\right] \\
& \quad + \expect\left[|\Im \hat{\zeta}_{o'}^{\lambda+i\eta}(o)|1_{|\Im \hat{\zeta}_{o'}^{\lambda+i\eta}(o)|\le c}\right] \le (Cc')^{1/2} + c'< c
\end{align*}
if $c'>0$ is small enough, yielding a contradiction. Here, we used Cauchy-Schwarz and the bound $\expect[|\Im \hat{\zeta}_{o'}^{\lambda+i\eta}(o)|^2] \le C$. Hence, $I_1\subseteq \sigma_{ac}^{\eps}(c')$ for some $c'>0$ as claimed, proving (i)

To prove (ii), we show that if $I_1=(-\lambda_0,\lambda_0)$, then the conditions in \eqref{e:greenequiv} are satisfied if $|\eps|$ is small. This is a consequence of the results of \cite[Theorem 1.3]{Klein}, according to which there exists $\eps(\lambda_0)$ such that if $|\eps|<\eps(\lambda_0)$, then the second condition is satisfied. For the first one, recall that if $\eps = 0$, then $\hat{\zeta}^{\lambda+i0}_{o'}(o) = \frac{\lambda-i\sqrt{4q-\lambda^2}}{2q}$ for any $\lambda\in(-2\sqrt{q},2\sqrt{q})$. In particular, $|\Im \hat{\zeta}_{o'}^{\lambda+i0}(o)| > 2c_{\lambda_0}>0$ for all $\lambda\in [-\lambda_0,\lambda_0]$. By \cite[Theorem 1.4]{Klein}, there exists $\epsilon_0=\epsilon(\lambda_0)$ such that the map $(\eps,\lambda,\eta)\mapsto \expect[|\Im \hat{\zeta}_{o'}^{\lambda+i\eta}(o)|]$ has a continuous extension\footnote{In \cite{Klein}, this is proved for the whole Green function $\cG^{\gamma}_{\eps}(o,o)$. To see this for $\hat{\zeta}_{o'}^{\gamma}(o)$, just replace the $(q+1)$-regular tree by the rooted $q$-ary tree. More precisely, represent $\expect[\hat{\zeta}_{o'}^{\gamma}(o)] = \frac{-i}{\pi} \int_{\R^2} (B_{\eps,\gamma} \zeta_{\eps,\gamma}^q)(\varphi^2)d^2\varphi$ instead of $\expect[\cG^{\gamma}_{\eps} (o,o)] = \frac{i}{\pi} \int_{\R^2} (B_{\eps,\gamma} \zeta_{\eps,\gamma}^{q+1})(\varphi^2)d^2\varphi$, where $B_{\eps,\gamma}$ and $\zeta_{\eps,\gamma}$ are defined in \cite{Klein}. Then both continuity assertions follow from \cite[Theorem 3.5]{Klein} and the discussion thereafter.} to $(-\eps_0,\eps_0)\times [-\lambda_0,\lambda_0]\times [0,\infty)$. Hence, we may find $\epsilon_1\le \epsilon_0$ and $\eta_1>0$ such that $\expect[|\Im \hat{\zeta}_{o'}^{\lambda+i\eta}(o)|] > c_{\lambda_0}$ for all $(\eps,\lambda,\eta)\in (-\eps_1,\eps_1)\times[-\lambda_0,\lambda_0]\times [0,\eta_1)$. So the first condition in \eqref{e:greenequiv} holds if the $\inf$ on $\eta$ is taken over $(0,\eta_1)$. To extend this to $(0,1)$, note that $\Im \hat{\zeta}_{o'}^{\lambda+i\eta}(o) = \eta \| (\cH_{\eps}^{(o|o')}-\lambda-i\eta)^{-1}\delta_o\|^2$. Since $\|\cH_{\eps}^{(o|o')}-\lambda-i\eta\|_{\ell^2\to\ell^2} \le (q+1) + \eps A+c_{I_1}+1=:\tilde{c}$ for any $\lambda\in I_1$ and $\eta\in (0,1)$, we have the deterministic bound $|\Im \hat{\zeta}_{o'}^{\lambda+i\eta}(o)|\ge \eta \tilde{c}^{-2}$. This completes the proof.
\end{proof}

Denote $G_N=(V_N,E_N)$, $I_1=(-\lambda_0,\lambda_0)$, and let $(\psi_j^{\omega_N})$ be an orthonormal basis of $\ell^2(V_N)$ with corresponding eigenvalues $(\lambda_j^{\omega_N})$. Using Propositions~\ref{p:example} and \ref{p:AW} (ii), it follows that \cite[Theorems 1.1,1.3]{AS2} hold $\cP$-a.s. on $I_1$ in the regime of weak disorder $|\eps|<\eps(\lambda_0)$. We thus get the following statement~:

\emph{Fix $R\in \N$. Let $K=K_N:V_N \times V_N\to \C$ satisfy $K(x,y)=0$ if $d(x,y)>R$ and $\sup_N \sup_{x,y\in V_N}|K_N(x,y)|\le 1$. Given $\gamma\in \C \setminus \R$, define the weighted average}
\begin{equation}\label{e:Klambda}
\langle K\rangle_{\lambda+i\eta_0}=  \sum_{x,y\in V_N}K(x, y) \Phi_{\lambda+i\eta_0}^N(\tilde{x},\tilde{y}) \quad \text{where} \quad \Phi_{\gamma}^N(\tilde{x},\tilde{y}) = \frac{\Im {\tilde g^{\gamma}_N}(\tilde x, \tilde y)}{\sum_{x\in V_N} \Im {\tilde g^{\gamma}_N}(\tilde x,\tilde x)} \,.
\end{equation}
\emph{Then there is $\Omega_0\subseteq \widetilde{\Omega}$, $\cP(\Omega_0)=1$, and $\eps(\lambda_0)>0$, such that for any $(\omega_N)\in \Omega_0$ and $|\eps|<\eps(\lambda_0)$, we have}
\[
\lim_{\eta_0\downarrow 0} \lim_{N\to \infty} \sum_{\lambda_j^{\omega_N}\in I_1} \left| \langle \psi_j^{\omega_N},K\psi_j^{\omega_N}\rangle - \langle K\rangle_{\lambda_j^{\omega_N}+i\eta_0}\right| = 0\,.
\]
Here, $\tilde{g}^{\gamma}_N(\tilde{x},\tilde{x})$ is the Green function of the ``lifted'' operator given by $\widetilde{H}_N^{\omega} = \mathcal{A}_{\mathbb{T}_q} + \widetilde{W}^{\omega_N}_{\eps}$, where $\widetilde{W}^{\omega_N}_{\eps}(v) = W^{\omega_N}_{\eps}(\pi_N v)$ and $\pi_N:\mathbb{T}_q\to G_N$ is the covering projection.

To conclude the proof of Theorems~\ref{thm:1}, \ref{t:thm3}, we show that if $K$ is independent of $(\omega_N)$, then we may replace $\langle K\rangle_{\lambda+i\eta_0}$ by the simpler average $\langle K \rangle_{\lambda}^{\eta_0}$.
 
\begin{lem}             \label{lem:newav}
Let $K_N: V_N \times V_N \to \C$ be independent of $(\omega_N)$ and satisfy $K_N(x,y) = 0$ if $d(x,y)>R$ and $\sup|K_N(x,y)| \le 1$. Fix $\eta_0>0$, and define $\langle K_N\rangle_{\lambda+i\eta_0}$ and $\langle K_N\rangle_{\lambda}^{\eta_0}$ as in \emph{(\ref{e:Klambda})} and \emph{(\ref{e:Klambda2})}. Then for $\mathcal{P}$-a.e. $(\omega_N)$,
\[
\left|\langle K_N \rangle_{\lambda +i\eta_0}- \langle K_N \rangle_{\lambda}^{\eta_0}\right| \Lim_{N\To +\infty} 0
\]
uniformly for $\lambda$ in a compact set.
\end{lem}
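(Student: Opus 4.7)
The plan is to write both averages as ratios ``numerator over denominator'' with matching structure and show $\cP$-a.s.\ convergence of the numerators and denominators separately. Writing $\gamma=\lambda+i\eta_0$,
\[
\langle K_N\rangle_{\lambda+i\eta_0}=\frac{\frac{1}{N}\sum_{x,y}K_N(x,y)\Im \tilde g_N^\gamma(\tilde x,\tilde y)}{\frac{1}{N}\sum_z\Im \tilde g_N^\gamma(\tilde z,\tilde z)},\qquad \langle K_N\rangle_\lambda^{\eta_0}=\frac{\frac{1}{N}\sum_{x,y}K_N(x,y)\expect[\Im\mathcal{G}_\eps^\gamma(\tilde x,\tilde y)]}{\expect[\Im\mathcal{G}_\eps^\gamma(o,o)]}.
\]
Since $\Im\mathcal{G}_\eps^\gamma(o,o)=\eta_0\|(\cH_\eps^\omega-\gamma)^{-1}\delta_o\|^2\ge \eta_0/(q+1+\eps A+|\lambda|+\eta_0)^2$, the bottom denominator is bounded below uniformly on any compact $\lambda$-set, so convergence of numerator and denominator will give the conclusion.

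The technical crux is a \emph{local approximation} of the Green function. For $\eta_0>0$ fixed, I would construct, for each $r>R$, a function $F_r^\gamma(\tilde x,\tilde y;\omega|_{B(\tilde x,r)})$ depending only on the potential values in the ball of radius $r$ around $\tilde x$, such that both $\bigl|\Im\tilde g_N^\gamma(\tilde x,\tilde y)-F_r^\gamma\bigr|$ and $\bigl|\Im\mathcal{G}_\eps^\gamma(\tilde x,\tilde y)-F_r^\gamma\bigr|$ are bounded by some $\varepsilon_r\to 0$ as $r\to\infty$, uniformly in the sample, in $N$, in pairs $(x,y)$ with $d(x,y)\le R$, and in $\lambda$ in the given compact set. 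Such an $F_r^\gamma$ is naturally produced by iterating the tree Green function recursion for the quantities $\hat\zeta$ (as in Proposition~\ref{p:AW}) starting from zero boundary data at depth $r$; the contractive properties of this M\"obius-type iteration in the upper half-plane (uniform for $\eta_0>0$ and bounded potentials) yield the exponential error bound. For $x$ with $\rho_{G_N}(x)\ge r$, the graph isomorphism $\phi_x$ from the proof of Proposition~\ref{p:example} identifies the lifted and tree versions of $F_r^\gamma$, so in particular their expectations coincide.

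With the truncation in hand, I would mimic the proof of Proposition~\ref{p:example}. Define
\[
Y_x^N=\sum_{y:\,d(x,y)\le R}K_N(x,y)\bigl[F_r^\gamma(\tilde x,\tilde y)-\expect F_r^\gamma(\tilde x,\tilde y)\bigr],
\]
which, for $x$ with $\rho_{G_N}(x)\ge r$, is bounded by $C_R/\eta_0$, has mean zero, and depends only on $(\omega_z)_{z\in B_{G_N}(x,r)}$; hence $Y_x^N$ and $Y_{x'}^N$ are independent whenever $d_{G_N}(x,x')>2r$. The same fourth-moment calculation as in Proposition~\ref{p:example} gives $\cE|N^{-1}\sum_x Y_x^N|^4\le C(r,\eta_0)/N^2$, so Borel--Cantelli yields $N^{-1}\sum_x Y_x^N\to 0$ $\cP$-a.s.\ for each fixed $r\in\N$. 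Combining with the uniform truncation estimate (letting $r\to\infty$) and noting that vertices with $\rho_{G_N}(x)<r$ form an $o(N)$-fraction by \textbf{(BST)}, we obtain $\cP$-a.s.\ convergence of the numerator. The denominator is handled identically with $K_N$ replaced by the indicator of the diagonal.

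Finally, to upgrade pointwise-in-$\lambda$ a.s.\ convergence to uniform convergence on a compact interval, I would use that $\lambda\mapsto\langle K_N\rangle_{\lambda+i\eta_0}-\langle K_N\rangle_\lambda^{\eta_0}$ extends holomorphically to a complex neighborhood of the real axis (since $\eta_0>0$), with derivatives bounded by a constant independent of $N$ and $\omega_N$; equicontinuity together with pointwise $\cP$-a.s.\ convergence on a countable dense set, and intersection of the corresponding full-measure events, gives the uniform statement. I expect the main obstacle to be the uniform truncation estimate: because of the exponential volume growth of $\mathbb{T}_q$, a straightforward Combes--Thomas bound on the ball is not sharp enough, and one must rely on the one-dimensional radial recursion specific to trees to obtain a decay rate in $r$ independent of the disorder realization.
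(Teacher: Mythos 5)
Your overall strategy — reduce to controlling numerators and denominators, replace the Green function by a local approximant with error uniform in the sample, apply the fourth-moment/Borel--Cantelli argument of Proposition~\ref{p:example}, and use \textbf{(BST)} to discard vertices of small injectivity radius — is correct and leads to the result. However, your key truncation step is genuinely different from the paper's, and it is also the least developed part of your argument.

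The paper replaces $h_{\eta_0}(t)=(t-i\eta_0)^{-1}$ by a polynomial $Q_\delta$ with $\|h_{\eta_0}-Q_\delta\|_\infty<\delta$ on the (uniformly bounded) spectrum. Then $Q_\delta(\widetilde H_N-\lambda)(\tilde x,\tilde y)$ and $Q_\delta(\cH_\eps^\omega-\lambda)(\tilde x,\tilde y)$ are finite-range, depending only on $(\omega_z)_{z\in B(\tilde x,d_\delta)}$, and the spectral theorem gives the deterministic bound $\eqref{eq:greenpoly}$ with no further analysis. The rest is then the same Borel--Cantelli scheme you propose, combined with the Benjamini--Schramm limit and \cite[eq.\ A.13]{AS2} to identify the two limits. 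Your proposal instead produces the local approximant $F_r^\gamma$ by iterating the tree recursion for $\hat\zeta$ from depth $r$, and invokes uniform contraction of the M\"obius iteration to get the error bound. This can be made to work — after one step the iterates enter the disk $|z|\le 1/\eta_0$, after two steps they lie in a compact subset of the upper half-plane (depending only on $\eta_0$, $q$, $|\lambda|$, $\eps A$), and on that compact set the Schwarz--Pick contraction factor is uniformly below $1$ — but this analysis, especially the multivariable Schwarz--Pick step for the $q$-fold sum $(z_1,\dots,z_q)\mapsto g(\sum z_i)$, is precisely the part you leave as a sketch. It is also strictly harder than what is needed: polynomial approximation gives the same locality with a one-line spectral-theorem error estimate, sacrificing the (irrelevant) exponential rate in $r$. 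You correctly identify that naive Combes--Thomas fails on trees, which is exactly why the paper avoids the issue entirely by working with polynomials in the operator rather than decay of the resolvent kernel. Finally, your holomorphy/equicontinuity argument for uniformity in $\lambda$ is a valid way to upgrade pointwise to uniform convergence and fills a step the paper treats somewhat implicitly. In summary: correct overall structure, but the truncation is a different and more delicate mechanism than the paper's, and it is the one place where your proposal would need substantial additional work to be a complete proof.
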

\begin{proof}
By the triangle inequality, we may assume $K_N(x,y) \neq 0$ only if $d(x,y)=R$.

Let $I\subset \R$ be compact, and let $\lambda\in I$.
Denote $\gamma =\lambda+i\eta_0$ and $S_R^x = \{y : d(x,y)=R\}$.

By Proposition~\ref{p:example}, there is $\Omega_0\subseteq\widetilde{\Omega}$ with $\cP(\Omega_0)=1$ such that for any $(\omega_N)\in \Omega_0$, $(G_N,W^{\omega_N})$ has a local weak limit $\mathbb{P}$. As in \cite[equation A.13]{AS2}, we now see $K_N$ as an additional coloring. Up to passing to a subsequence, for any $(\omega_N)\in \Omega_0$, $(G_N,W^{\omega_N},K_N)$ has a local weak limit $\hat{\mathbb{P}}$ whose marginal on $\mathscr{G}_{\ast}^{D,A}$ coincides with $\mathbb{P}$. So by \cite[equation A.13]{AS2}, we have for any $(\omega_N)\in \Omega_0$ and uniformly in $\lambda\in I$, 
\begin{equation}\label{e:firslimit}
\langle K_N \rangle_{\lambda+i\eta_0} \Lim_{N\To \infty} \frac{\hat{ \mathbb{E}}\left(\sum_{w\in S_R^v}\cK(v, w)\Im \cG^\gamma_{\eps}(v, w)\right)}{\mathbb{E}\left[\Im \cG_{\eps}^{\gamma}(v,v)\right]} \, ,
\end{equation}
where $\cK:V(\mathbb{T}_q)\times V(\mathbb{T}_q)\to \C$ is a random coloring on $\mathbb{T}_q$ with values in $\{|z|\le 1\}$. By Proposition~\ref{p:example}, $\mathbb{E}[\Im \cG_{\eps}^{\gamma}(v,v)] = \mathbf{E}[\Im \cG_{\eps}^{\gamma}(o,o)]$. On the other hand,
\begin{equation}     \label{eq:klimit}
\frac{1}{N} \sum_{x\in V_N} \sum_{y\in S_R^x}  K_N(x,y) \Lim_{N\To +\infty}\hat{\mathbb{E}}\left(\sum_{w\in S_R^v}\cK(v, w)\right)\,.
\end{equation}
Since $\langle K_N\rangle_{\lambda}^{\eta_0} = \frac{\expect[\Im \cG_{\eps}^{\gamma}(o,y_R)]}{\expect[\Im \cG_{\eps}^{\gamma}(o,o)]} \cdot \frac{1}{N} \sum_{x\in V_N}  \sum_{y\in S_R^x}  K_N(x,y)$, where $y_R$ is any point at distance $R$ from $o$, we thus have
\begin{equation}\label{e:secondlimit}
\langle K_N\rangle_{\lambda}^{\eta_0} \Lim_{N\To +\infty} \frac{\hat{\mathbb{E}}\left(\sum_{w\in S_R^v}\cK(v, w)\right)  \mathbf{E}\left[\Im \cG^\gamma_{\eps}(o, y_R)\right]}{\mathbf{E}[\Im \cG_{\eps}^{\gamma}(o,o)]}
\end{equation}
uniformly in $\lambda\in I$. Comparing \eqref{e:firslimit}, \eqref{e:secondlimit}, the lemma will follow if we show that the limits are equal, i.e. for any fixed $\gamma=\lambda+i\eta_0$, we have
\begin{equation}\label{eq:indep}
\hat{ \mathbb{E}}\left(\sum_{w\in S_R^v}\cK(v, w)\Im \cG^\gamma_{\eps}(v, w)\right) = \hat{\mathbb{E}}\left(\sum_{w\in S_R^v}\cK(v, w)\right)  \mathbf{E}\left[\Im \cG^\gamma_{\eps}(o, y_R)\right]\,.
\end{equation}
In fact, we know the LHS is the limit of $\frac{1}{N}\sum_{x\in V_N,y\in S_R^x} K_N(x,y)\Im \tilg_N^{\gamma}(\tilde{x},\tilde{y})$ for any $(\omega_N)\in\Omega_0$, so it suffices to show that $\frac{1}{N}\sum_{x\in V_N,y\in S_R^x} K_N(x,y)\Im \tilg_N^{\gamma}(\tilde{x},\tilde{y})$ also converges to the RHS. For this, let $h_{\eta_0}(t)=(t-i\eta_0)^{-1}$ and given $\delta>0$, let $Q_{\delta}$ be a polynomial with $\|h_{\eta_0}-Q_{\delta}\|_{\infty}<\delta$. Then for any $v,w\in\mathbb{T}_q$,
\begin{equation}\label{eq:greenpoly}
|\tilg^{\gamma}_N(v,w)-Q_{\delta}(\widetilde{H}_N-\lambda)(v,w)|<\delta \quad \text{and} \quad |\cG^{\gamma}_{\eps}(v,w)-Q_{\delta}(\cH_{\eps}^{\omega}-\lambda)(v,w)|<\delta\,.
\end{equation}
Now consider
\[
Y_x = \sum_{y\in S_R^x}K_N(x,y)(\Im Q_{\delta}(\widetilde{H}_N-\lambda)(\tilde{x},\tilde{y}) - \cE[\Im Q_{\delta}(\widetilde{H}_N-\lambda)(\tilde{x},\tilde{y})])
\]
and $S_N = \frac{1}{N}\sum_{x\in V_N}Y_x$. If $Q_{\delta}$ has degree $d_{\delta}$, then $Q_{\delta}(\widetilde{H}_N-\lambda)(\tilde{x},\tilde{y})$ only depends on $(\omega_z)_{z\in B_{G_N}(x,d_{\delta})}$. So $Y_x$ and $Y_y$ are independent if $d_{G_N}(x,y)>2d_{\delta}$. Repeating the arguments of Proposition~\ref{p:example}, we see that $S_N\to 0$ on a set $\Omega_{\lambda}$ of full probability. We may thus replace $\Im Q_{\delta}(\widetilde{H}_N-\lambda)(\tilde{x},\tilde{y})$ by $\cE[\Im Q_{\delta}(\widetilde{H}_N-\lambda)(\tilde{x},\tilde{y})]$ as in \eqref{eq:replace}.

We have again $\cE[\Im Q_{\delta}(\widetilde{H}_N-\lambda)(\tilde{x},\tilde{y})] = \mathbf{E}[\Im Q_{\delta}(\cH_{\eps}^{\omega}-\lambda)(o,y_R)]$ if $\rho_{G_N}(x)\ge d_{\delta}+R$. So using \eqref{eq:greenpoly}, \eqref{eq:klimit} and \textbf{(BST)}, we get for any $(\omega_N)\in \Omega_{\lambda} \cap \Omega_0$,
\begin{multline*}
\Big| \frac{1}{N}\sum_{x\in V_N,y\in S_R^x} K_N(x,y)\Im \tilg_N^{\gamma}(\tilde{x},\tilde{y}) -  \hat{\mathbb{E}}\left(\sum_{w\in S_R^v}\cK(v, w)\right)  \mathbf{E}\left[\Im \cG^\gamma_{\eps}(o, y_R)\right] \Big| \\
\le |S_N| + \frac{\#\{x:\rho_{G_N}(x)<d_{\delta}+R\}}{N}(2(q+1)q^{R-1}\eta_0^{-1}) + 2(q+1)q^{R-1}\delta \To 0
\end{multline*}
as $N\to\infty$ followed by $\delta\downarrow 0$. So for $(\omega_N)\in \Omega_{\lambda}\cap \Omega_0$, $\frac{1}{N}\sum_{x\in V_N,y\in S_R^x} K_N(x,y)\Im \tilg_N^{\gamma}(\tilde{x},\tilde{y})$ converges to both the LHS and RHS of \eqref{eq:indep}, and the claim follows.
\end{proof}

\medskip

{\bf{Acknowledgements~:}} This material is based upon work supported by the Agence Nationale de la Recherche under grant No.ANR-13-BS01-0007-01, by the Labex IRMIA and the Institute of Advance Study of Universit\'e de Strasbourg, and by Institut Universitaire de France.

\providecommand{\bysame}{\leavevmode\hbox to3em{\hrulefill}\thinspace}
\providecommand{\MR}{\relax\ifhmode\unskip\space\fi MR }
\providecommand{\MRhref}[2]{%
  \href{http://www.ams.org/mathscinet-getitem?mr=#1}{#2}
}
\providecommand{\href}[2]{#2}

\end{document}